\newtheorem{theorem}{Theorem}[section]
\newtheorem{lemma}[theorem]{Lemma}
\newtheorem{proposition}[theorem]{Proposition}
\long\def\symbolfootnote[#1]#2{\begingroup%
\def\thefootnote{\fnsymbol{footnote}}\footnote[#1]{#2}\endgroup}
\begin{document}

\title{Embedding Groups into Distributive Subsets of the Monoid of Binary Operations}
\author{Gregory T.~Mezera}


\centerline{February 2012- October 2012}
\subjclass{Primary 55N35; Secondary 18G60, 57M25}
\keywords{monoid of binary operations, distributive set, shelf, multi-shelf, distributive homology}

\thispagestyle{empty}

\begin{abstract}
Let $X$ be a set and $Bin(X)$ the set of all binary operations on $X$.
We say that $S\subset Bin(X)$ is a distributive set of operations if all
pairs of elements $*_{\alpha},*_{\beta} \in S$ are right distributive,
 that is, $ (a*_{\alpha}b)*_{\beta}c= (a*_{\beta}c)*_{\alpha}(b*_{\beta}c)$ (we allow  $*_{\alpha}=*_{\beta}$).

J.Przytycki posed the question of which groups can be realized as distributive sets. The initial guess
that any group may be  embedded into $Bin(X)$ for some $X$ was complicated by an observation that
if $*\in S$ is idempotent ($a*a=a$), then $*$ commutes with every element of $S$. The first
noncommutative subgroup of $Bin(X)$ (the group $S_3$) was found in October of 2011 by Y.Berman.

We show that any group can be embedded in $Bin(X)$ for $X=G$ (as a set).
We also discuss minimality of embeddings observing that, in particular, $X$ with six elements 
is the smallest set such that $Bin(X)$ contains a non-abelian subgroup.

\end{abstract}
\maketitle

\tableofcontents

\section{Introduction}
Let $X$ be a set and $Bin(X)$ the set of all binary operations on $X$. 
We say that $S\subset Bin(X)$ is a distributive set of operations if  all
pairs of elements $*_{\alpha},*_{\beta} \in S$ are right distributive,
 that is, $ (a*_{\alpha}b)*_{\beta}c= (a*_{\beta}c)*_{\alpha}(b*_{\beta}c)$ (we allow  $*_{\alpha}=*_{\beta}$).
$(X;S)$ is called a multi-shelf\footnote{If $(X;*)$ is a magma and $*$ is a right self-distributive operation, 
then $(X;*)$ is called a shelf - the term coined by Alissa Crans in her PhD thesis \cite{Cr}.} in this case.
It was observed in \cite{Prz-1} (compare also \cite{Ro-S}) that $Bin(X)$ is a monoid with  composition
$*_1*_2$ given by $a*_1*_2b= (a*_1b)*_2b$, with the identity $*_0$ being the right trivial operation, that is,
$a*_0b=a$ for any $a,b\in X$.

The submonoid of $Bin(X)$ of all invertible elements in $Bin(X)$ is a group denoted by $Bin_{inv}(X)$.
If $* \in Bin_{inv}(X)$ then $*^{-1}$ is usually denoted by $\bar *$.

We say that a subset $S \subset Bin(X)$ is a distributive set if all
pairs of elements $*_{\alpha},*_{\beta} \in S$ are right distributive,
 that is, $ (a*_{\alpha}b)*_{\beta}c= (a*_{\beta}c)*_{\alpha}(b*_{\beta}c)$ (we allow  $*_{\alpha}=*_{\beta}$).

The following important basic lemma was proven in \cite{Prz-1}:
\begin{lemma}\label{Lemma 1}
\begin{enumerate}
\item[(i)] If $S$ is a distributive set and $*\in S$ is invertible, then $S\cup \{\bar *\}$ is also a
distributive set.
\item[(ii)] If $S$  is a distributive set and $M(S)$ is the monoid generated by $S$, then $M(S)$ is a
distributive monoid.
\item[(iii)] If $S$  is a distributive set of invertible operations and $G(S)$ is the group generated by $S$, then
$G(S)$ is a distributive group.
\end{enumerate}
\end{lemma}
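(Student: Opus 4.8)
The plan is to take the three parts in order: part~(i) carries essentially all of the content, and parts~(ii) and~(iii) are then formal bookkeeping on top of it. Throughout I will use only that $*\in Bin_{inv}(X)$ means $*\bar *=\bar **=*_0$, which unwound says $(a*b)\bar * b=a$ and $(a\bar * b)* b=a$ for all $a,b\in X$; I write $*_1\perp *_2$ for the (ordered) relation $(a*_1b)*_2c=(a*_2c)*_1(b*_2c)$, so that a set is distributive precisely when $*_\alpha\perp *_\beta$ for every ordered pair of its members.

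For part~(i), fix the invertible $*\in S$ and an arbitrary $*_\alpha\in S$, and verify the three new relations $\bar *\perp *_\alpha$, $*_\alpha\perp\bar *$, and $\bar *\perp\bar *$. The single tool is a ``substitute, then cancel'' move: start from a distributivity law already known to hold; substitute $\bar *$-images for one or two of the three free variables so that a composite of the form $(\,\cdot\,*\,\cdot\,)$ collapses via $(a\bar * b)*b=a$; then post-compose both sides on the right with $\bar *$, against the appropriate element, to remove the remaining outer $*$ via $(x*y)\bar * y=x$. Concretely, substituting $a\mapsto a\bar * b$ into $(a*b)*_\alpha c=(a*_\alpha c)*(b*_\alpha c)$ and post-composing with $\bar *$ against $b*_\alpha c$ gives $\bar *\perp *_\alpha$; substituting $a\mapsto a\bar * c$ and $b\mapsto b\bar * c$ into $(a*_\alpha b)*c=(a*c)*_\alpha(b*c)$ and post-composing with $\bar *$ against $c$ gives $*_\alpha\perp\bar *$; and running the now-established law $(a*b)\bar * c=(a\bar * c)*(b\bar * c)$ through the same machine (substitute $a\mapsto a\bar * b$, then post-compose with $\bar *$ against $b\bar * c$) gives $\bar *\perp\bar *$. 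There is no genuine obstruction here, but it is the place where care is needed: every identity has two operation slots and three variable slots, and it is easy to substitute on the wrong side or to cancel the wrong occurrence of $*$.

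For part~(ii) I would first isolate two closure facts about $\perp$ under the monoid product $a*_1*_2b=(a*_1b)*_2b$: (B)~if $*_1\perp *_3$ and $*_2\perp *_3$ then $(*_1*_2)\perp *_3$; and (C)~if $*_3\perp *_1$ and $*_3\perp *_2$ then $*_3\perp(*_1*_2)$. Each is proved by unfolding the product on the left, applying the two hypothesised distributivities in turn, and recognising the result as the required product on the right. I would also record the trivial base cases $*_0\perp *$ and $*\perp *_0$ for every $*$. Then, given words $\circledast_1,\circledast_2$ in the generators from $S$ (the empty word being $*_0$): by induction on the length of $\circledast_2$ using (C) and the base case, each generator occurring in $\circledast_1$ is right distributive with $\circledast_2$; and then by induction on the length of $\circledast_1$ using (B) and the base case, $\circledast_1\perp\circledast_2$. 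Since $M(S)$ is by construction a submonoid of $Bin(X)$, it is a distributive monoid. The subtlety is that (B) and (C) are genuinely different — distributing with $*_3$ on the outside versus on the inside — so the two inductions must be run in the correct order: (C) on the inner word, then (B) on the outer.

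For part~(iii), set $\bar S=\{\bar *:*\in S\}$. Any pair of operations drawn from $S\cup\bar S$ involves at most two elements of $S$, so applying part~(i) twice shows that pair is right distributive; hence $S\cup\bar S$ is a distributive set, all of whose members are invertible, regardless of whether $S$ is finite. The monoid it generates is closed under inverses — the inverse of a word $s_1s_2\cdots s_k$ is $\bar s_k\cdots\bar s_1$, again a word in $S\cup\bar S$ — so that monoid is a group, and therefore it coincides with $G(S)$. Thus part~(ii) applied to $S\cup\bar S$ realises $G(S)$ as a distributive monoid which happens to be a group, i.e.\ a distributive group. In short, the only delicate step is the bookkeeping in part~(i); everything after it is formal.
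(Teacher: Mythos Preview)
Your argument is correct in all three parts: the ``substitute, then cancel'' manoeuvre in (i) checks out for each of the three new relations, the closure facts (B) and (C) in (ii) are valid and the double induction is set up properly, and the reduction in (iii) via $S\cup\bar S$ is sound (your remark that any pair involves at most two elements of $S$, so two applications of (i) suffice, is exactly the right observation to avoid any finiteness assumption on $S$).

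As for comparison: the paper does not actually give a proof of this lemma. It is stated as a known result and attributed to \cite{Prz-1}, so there is no in-paper argument to compare against. Your write-up would serve perfectly well as a self-contained proof; the only cosmetic suggestion is that the verifications in (i) and the identities (B),(C) in (ii) could each be displayed as a one-line chain of equalities rather than described in prose, which would make the bookkeeping you warn about easier for a reader to audit.
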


The question was asked by J.Przytycki which groups can be realized as distributive sets. 
Soon after the definition of a distributive submonoid of $Bin(X)$ was given in \cite{Prz-1},  
Michal Jablonowski, a graduate student
at Gda\'nsk University, noticed that any distributive monoid whose elements are idempotent operations
is commutative. We have:
\begin{proposition} (\cite{Prz-1})\label{Proposition ??}
\begin{enumerate}
\item[(i)]
 Consider $*_{\alpha},*_{\beta}\in Bin(X)$ such that $*_{\beta}$ is idempotent ($a*_{\beta}a=a$) and
distributive with respect to $*_{\alpha}$. Then $*_{\alpha}$ and $*_{\beta}$ commute. In particular:
\item[(ii)] If $M$ is a distributive monoid and $*_{\beta}\in M$ is an idempotent operation, then $*_{\beta}$
is in the center of $M$.
\item[(iii)] A distributive monoid whose elements are idempotent operations is commutative.
\end{enumerate}
\end{proposition}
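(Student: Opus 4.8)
The plan is to reduce everything to part (i), which is where the one genuine computation lives, and then derive (ii) and (iii) formally.

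For (i), I would start from the hypothesis that the pair $(*_{\alpha},*_{\beta})$ is right distributive, i.e.
\[
(a*_{\alpha}b)*_{\beta}c = (a*_{\beta}c)*_{\alpha}(b*_{\beta}c)
\]
for all $a,b,c\in X$, together with the idempotence $a*_{\beta}a=a$. The key move is to specialize $c=b$. The right-hand side then contains the factor $b*_{\beta}b$, which collapses to $b$ by idempotence, leaving
\[
(a*_{\alpha}b)*_{\beta}b = (a*_{\beta}b)*_{\alpha}b .
\]
Now I would simply recall the definition of the monoid multiplication in $Bin(X)$, namely $a\,(*_{1}*_{2})\,b = (a*_{1}b)*_{2}b$: the left-hand side above is exactly $a\,(*_{\alpha}*_{\beta})\,b$ and the right-hand side is exactly $a\,(*_{\beta}*_{\alpha})\,b$. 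Since $a,b$ were arbitrary this gives $*_{\alpha}*_{\beta}=*_{\beta}*_{\alpha}$, proving (i).

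For (ii): if $M$ is a distributive monoid and $*_{\beta}\in M$ is idempotent, then for every $*_{\alpha}\in M$ the pair $(*_{\alpha},*_{\beta})$ is right distributive (that is precisely what it means for $M$ to be a distributive set), so part (i) applies verbatim and $*_{\beta}$ commutes with $*_{\alpha}$. As $*_{\alpha}$ ranges over all of $M$, this says $*_{\beta}$ lies in the center of $M$. Part (iii) is then immediate: if every element of $M$ is idempotent, then by (ii) every element is central, so any two elements commute and $M$ is commutative.

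I do not expect a real obstacle here; the proof is short once one sees the substitution $c=b$. The only points requiring care are bookkeeping: matching the specialized distributivity identity against the formula $a\,(*_{1}*_{2})\,b=(a*_{1}b)*_{2}b$ for composition in $Bin(X)$, and applying idempotence to the correct factor (the ``$b*_{\beta}b$'' on the right-hand side, not anywhere on the left). It is also worth recording that only one of the two distributivity identities for the pair $\{*_{\alpha},*_{\beta}\}$ is used, and that it is idempotence of $*_{\beta}$, not of $*_{\alpha}$, that makes the argument go through — which is exactly why an idempotent element is forced to be central rather than merely to commute with the other idempotents.
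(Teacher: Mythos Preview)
Your argument is correct and matches the paper's proof essentially line for line: specialize $c=b$ in the distributivity identity, collapse $b*_{\beta}b$ to $b$ by idempotence, and read off $*_{\alpha}*_{\beta}=*_{\beta}*_{\alpha}$ from the definition of composition in $Bin(X)$. The paper records only this one-line computation and leaves (ii) and (iii) implicit; your spelling those out is harmless and accurate.
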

\begin{proof} We have: $(a*_{\alpha}b)*_{\beta}b \stackrel{distrib}{=} (a*_{\beta}b)*_{\alpha}(b*_{\beta}b)
\stackrel{idemp}{=}
(a*_{\beta}b)*_{\alpha}b$.
\end{proof}
A few months later Agata Jastrz{\c e}bska (also  a graduate student at Gda\'nsk University) 
checked that any distributive group in $Bin_{inv}(X)$ for  $|X|\leq 5$  is
commutative. 

The first 
noncommutative subgroup of $Bin(X)$ (the group $S_3$) was found in October of 2011 by Yosef Berman.
Soon after Berman (with the help of Carl Hammarsten) constructed an embedding of a general dihedral group $D_{2\cdot n}$
in $Bin(X)$ where $X$ has $2n$ elements. The embedding of Berman $\phi: D_{2\cdot 3}\to Bin(X)$ is 
given as follows:
if $X=\{0,1,2,3,4,5\}$ then the subgroup $D_{2\cdot 3} \subset Bin(X)$ is generated by the binary operations
$*_{\tau}$ (reflection) and $*_{\sigma}$ (a $3$-cycle):
$$*_{\tau}=  \left( \begin{array}{cccccc}
1 & 1 & 3 & 5 & 5 & 3 \\
0 & 0 & 4 & 2 & 2 & 4 \\
3 & 3 & 5 & 1 & 1 & 5 \\
2 & 2 & 0 & 4 & 4 & 0  \\
5 & 5 & 1 & 3 & 3 & 1 \\
4 & 4 & 2 & 0 & 0 & 2
\end{array} \right)
\text{ and }
*_{\sigma} =  \left( \begin{array}{cccccc}
2 & 4 & 2 & 4 & 2 & 4 \\
5 & 3 & 5 & 3 & 5 & 3 \\
4 & 0 & 4 & 0 & 4 & 0 \\
1 & 5 & 1 & 5 & 1 & 5 \\
0 & 2 & 0 & 2 & 0 & 2 \\
3 & 1 & 3 & 1 & 3 & 1
\end{array} \right). $$
where $i*j$ is placed in the $i$th row and $j$th column, 
and $D_{2\cdot 3}=\{\tau, \sigma\ | \ \tau \sigma \tau = \sigma^{-1} \}$.


\section{Regular distributive embedding}

We now show that any group $G$ can be embedded in $Bin(X)$ for some $X$.
\begin{theorem}(Regular embedding)\label{Theorem 1}\\
Every group $G$ embeds in $Bin(G)$.
This embedding (monomorphism), $\phi^{reg}: G \rightarrow Bin(G)$ sends $g$ to $*_g$ where $a*_gb=ab^{-1}gb$.
\end{theorem}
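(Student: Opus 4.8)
The plan is to verify by direct computation in $G$ that $\phi^{reg}$ is an injective monoid homomorphism landing in $Bin_{inv}(G)$, and that its image is a distributive set; together with Lemma~\ref{Lemma 1}(iii) this exhibits $G$ as a distributive subgroup of $Bin(G)$. Everything reduces to elementary group manipulations, and the only step carrying genuine content is the right-distributivity identity.

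First I would evaluate the monoid composition on two operations in the image. For $g,h\in G$ and $a,b\in G$,
$$a\,(*_g *_h)\, b = (a *_g b) *_h b = (ab^{-1}gb) *_h b = (ab^{-1}gb)\,b^{-1}hb = ab^{-1}(gh)b = a *_{gh} b,$$
so $*_g *_h = *_{gh}$ and $\phi^{reg}$ is a homomorphism of monoids. The identity $e\in G$ maps to $a *_e b = ab^{-1}b = a$, i.e.\ to the right-trivial operation $*_0$, the unit of $Bin(G)$; hence each $*_g$ is invertible with $\overline{*_g} = *_{g^{-1}}$, and $\phi^{reg}(G)\subseteq Bin_{inv}(G)$. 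Injectivity is clear: $*_g = *_h$ forces $ab^{-1}gb = ab^{-1}hb$ for all $a,b$, and cancelling $ab^{-1}$ on the left and $b$ on the right gives $g=h$.

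It then remains to check that $S = \phi^{reg}(G) = \{\,*_g : g\in G\,\}$ is a distributive set, i.e.\ that $(a *_g b) *_h c = (a *_h c) *_g (b *_h c)$ for all $g,h,a,b,c\in G$. The left-hand side equals $(ab^{-1}gb)\,c^{-1}hc$. For the right-hand side I would set $p = a *_h c = ac^{-1}hc$ and $q = b *_h c = bc^{-1}hc$; then $q^{-1} = c^{-1}h^{-1}c\,b^{-1}$, and the conjugating factors telescope:
$$pq^{-1} = (ac^{-1}hc)(c^{-1}h^{-1}c\,b^{-1}) = ab^{-1}.$$
Hence $(a *_h c) *_g (b *_h c) = p\,q^{-1}\,g\,q = ab^{-1}\,g\,(bc^{-1}hc) = ab^{-1}gb\,c^{-1}hc$, which matches the left-hand side. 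Since $S$ is already a subgroup of $Bin_{inv}(G)$ isomorphic to $G$, this completes the proof.

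I expect the only real obstacle to be conceptual rather than technical: producing the formula $a *_g b = ab^{-1}gb$ — right multiplication by the $b$-conjugate of $g$ — so that multiplying parameters matches composing operations and the conjugating factors cancel in the distributivity law. Once that formula is fixed, each assertion is a one- or two-line calculation. If one wished to trim the bookkeeping it would also suffice, by Lemma~\ref{Lemma 1}(ii), to verify distributivity only for $*_t$ and $*_{t^{-1}}$ with $t$ ranging over a generating set of $G$; but since the computation above already handles arbitrary pairs $*_g,*_h$, there is little to gain from that reduction.
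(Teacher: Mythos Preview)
Your proof is correct and follows essentially the same route as the paper: direct computation verifies that $\phi^{reg}$ is a monoid homomorphism, that it is injective, and that the image is a distributive set. The only cosmetic differences are that the paper checks injectivity by substituting $b=1$ (obtaining $a*_g 1 = ag$), whereas you cancel in the general identity, and you add a few organizational remarks (invoking Lemma~\ref{Lemma 1}) that the paper leaves implicit.
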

\begin{proof}
(i)
We check that the set $\{*_g\}_{g\in G}$ is a distributive set.
We have: \\
$(a*_{g_1}b)*_{g_2}c= (ab^{-1}g_1b)*_{g_2}c= ab^{-1}g_1bc^{-1}g_2c$,
and 
$$(a*_{g_2}c)*_{g_1}(b*_{g_2}c)=(ac^{-1}g_2c)*_{g_1}(bc^{-1}g_2c)= 
ab^{-1}g_1bc^{-1}g_2c, \mbox{ as needed.}$$
(ii) Now we check that the map $\phi^{reg}$ is a monomorphism. 
Of course the image of the identity $*_0$ is the identity in $Bin(G)$. Furthermore:
$a*_{g_1g_2}b=ab^{-1}g_1g_2b$, and \\
$a*_{g_1}*_{g_2}b= (a*_{g_1}b)*_{g_2}b=ab^{-1}g_1bb^{-1}g_2b=ab^{-1}g_1g_2b$, as needed.
We have proven that $\phi^{reg}$ is a homomorphism. To show that $\phi^{reg}$ is a monomorphism 
we substitute  $b=1$ in the formula for $a*_gb$, to get
$a*_{g} 1=ag$, so different $g$'s give different binary operations in $Bin(G)$.
Notice that $\phi^{reg}(g^{-1}) = \bar *_g$.
\end{proof}

We call our embedding {\it regular} by analogy to the regular representation of a group.
We do not claim that the regular embedding is minimal. In fact, finding minimal distributive embeddings
is a very interesting problem in itself. 

\section{General conditions for a distributive embedding}

We now discuss a method that can be used to embed groups into subsets of $Bin_{inv}(X)$ 
satisfying a given condition. We then use this method when the condition is right distributivity,
which led us to discover the regular distributive embedding of $G$ in $Bin(G)$, 
and also should be a natural tool to look for minimal embeddings. For the group $S_3$ 
we know, by Jastrzebska's calculations, that  $X$ with six elements is the minimal set such that 
$S_3$ embeds in $Bin(X)$.

We start from the following basic observation:
\begin{lemma}
There is an isomorphism between $Bin_{inv}(X)$ and $S_{X}^{|X|}$, where $|X|$ is the cardinality of $|X|$
and $S_{X}$ is the group of permutation on set $X$  ( i.e. bijections of the set $X$). The isomorphism 
$\alpha: Bin_{inv}(X) \to S_{X}^{|X|}=\Pi_{y\in X}{S}_X^y$ is described as follows:
 $\alpha (*)(y): X\to X$ is the bijection  where 
$(\alpha (*)(y))(x)=x*y$. In other words $\alpha (*)(y)$ is the bijection corresponding to the $y$ coordinate of 
$S_{X}^{|X|}$. 
\end{lemma}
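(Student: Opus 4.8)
The plan is to exhibit the map $\alpha$ explicitly, verify it is a well-defined group homomorphism, and then construct an inverse, from which bijectivity follows. First I would check that $\alpha$ lands in the right target: given an invertible $* \in Bin_{inv}(X)$ and a fixed $y \in X$, the assignment $x \mapsto x*y$ must be a bijection of $X$. This is precisely the condition that $*$ be invertible in the monoid $Bin(X)$ — one should recall (or note) that $*$ has a compositional inverse $\bar *$ if and only if the right-translation map $R_y^*(x) := x*y$ is a bijection for every $y$, since composition in $Bin(X)$ is given by $a*_1*_2 b = (a*_1 b)*_2 b$, i.e.\ it composes the right-translations coordinatewise in $y$. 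Thus $\alpha(*) = (R_y^*)_{y \in X} \in \prod_{y \in X} S_X$ is well defined.

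Next I would verify that $\alpha$ is a homomorphism. For $*_1, *_2 \in Bin_{inv}(X)$ and any $y \in X$, we have $(\alpha(*_1 *_2)(y))(x) = x *_1*_2 y = (x*_1 y)*_2 y = R_y^{*_2}(R_y^{*_1}(x)) = \bigl(R_y^{*_2} \circ R_y^{*_1}\bigr)(x)$, so $\alpha(*_1 *_2)(y) = \alpha(*_2)(y) \circ \alpha(*_1)(y)$; with the appropriate convention on the product group (composition order), this says $\alpha(*_1 *_2) = \alpha(*_1)\alpha(*_2)$, i.e.\ $\alpha$ respects the operation in each coordinate $y$ independently. The identity $*_0$ maps to the tuple of identity permutations since $x *_0 y = x$.

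Then I would construct the inverse map $\beta : \prod_{y \in X} S_X \to Bin_{inv}(X)$: given a tuple $(\pi_y)_{y \in X}$ of permutations, define $x * y := \pi_y(x)$. One checks directly that this $*$ is invertible in $Bin(X)$ — its inverse is the operation associated to the tuple $(\pi_y^{-1})_{y\in X}$ — so $\beta$ indeed lands in $Bin_{inv}(X)$, and that $\alpha \circ \beta$ and $\beta \circ \alpha$ are the identity maps, essentially by unwinding the definitions. This shows $\alpha$ is a bijective homomorphism, hence an isomorphism.

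I expect the only subtle point to be the first step: pinning down exactly why invertibility of $*$ in the monoid $Bin(X)$ is equivalent to each right-translation $R_y^*$ being a bijection, rather than merely each being injective or each being surjective, and handling this cleanly when $X$ is infinite (where injective does not imply bijective). The cleanest route is to observe that composition in $Bin(X)$ acts coordinatewise, so $*$ admits a two-sided compositional inverse iff in each coordinate $y$ the endomap $R_y^*$ of $X$ admits a two-sided inverse, which happens iff $R_y^*$ is a bijection; everything else is routine bookkeeping about the order of composition in the product group $S_X^{|X|}$.
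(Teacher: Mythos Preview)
Your proof is correct and is exactly the natural argument; the paper itself states this lemma as a ``basic observation'' and provides no proof at all, so there is nothing to compare your approach against. Your handling of the only genuine subtlety---that invertibility of $*$ in $Bin(X)$ is equivalent, via the coordinatewise nature of the monoid composition $a*_1*_2 b=(a*_1 b)*_2 b$, to each right-translation $R_y^*$ being a two-sided invertible self-map of $X$---is clean and works uniformly for infinite $X$; the remark about composition-order conventions in $S_X^{|X|}$ is also apt, since strictly speaking $\alpha$ is an anti-homomorphism under the usual $(\sigma\tau)(x)=\sigma(\tau(x))$ convention, which still yields a group isomorphism.
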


Using the map $\alpha $, we can translate conditions on a set of binary
operations in $Bin(X)$ into a group-theoretic condition on (coordinates of)
elements of $S_{X}^{|X|}.$ With some work, we can use this to find an embedding
of a group into $Bin(X).$ This is possible since the group axioms
require that such an embedding sits inside $Bin_{inv}(X).$ 
Let us consider distributive, invertible sets $\mathcal{S}$ of
binary operations in $Bin_{inv}(X)$. These are subsets $\mathcal{S}\subseteq
Bin_{inv}(X)$ that satisfy:
\[
(x\ast _{i}y)\ast _{j}z=(x\ast _{j}z) \ast _{i}(y\ast _{j}z),\text{ }for\text{ }all\text{
}\ast _{i},\ast _{j}\in S\text{ }and\text{ }x,y,z\in X.
\]

Let $\sigma _{i,y}=p_{y}\alpha (*_i),$ where $p_{y}:S_{X}^{|X|}\rightarrow
S_{X}$ is projection onto the $y^{th}$ coordinate. Then translating the
distributivity condition via $\alpha $:
\[
\sigma _{j,z}(x\ast _{i}y)=\sigma _{i,(y\ast _{j}z)}(x*_jz),
\]%
or%
\[
\sigma _{j,z}(\sigma _{i,y}(x))=\sigma _{i,\sigma _{j,z}(y)}(\sigma_{j,z}(x)),
\]%
which leads to%
\[
\sigma _{i,\sigma _{j,z}(y)}=\sigma _{j,z}\sigma _{i,y}\sigma _{j,z}^{-1}.
\]

Now the problem of embedding a group into $Bin_{inv}(X)$ is reduced to finding subsets of $S_{X}^{|X|}$
 isomorphic to the group that satisfy the condition above.
 We can then use tools of group theory
(e.g., representation theory) to solve the problem.
This process can by attempted for subsets of $Bin_{inv}(X)$ satisfying any condition, and led 
to the embedding defined in the previous section for distributive subsets.

\section{Future directions; multi-term homology}
Przytycki defined multi-term homology for any distributive 
set  in \cite{Prz-1}. This provided motivation to have examples of 
distributive sets. The regular embedding of a group (Theorem \ref{Theorem 1}) provides an 
interesting family of distributive sets ripe for studying this homology (compare \cite{CPP,Prz-1,Prz-2,Pr-Pu,P-S}).
As a nontrivial example we propose computing $n$-term distributive homology related with 
the regular embedding of the cyclic group $Z_n$.
Another problem related to Theorem \ref{Theorem 1} is which
 monoids are distributive submonoids of $Bin(X)$.

A key motivation is to use multi-term distributive homology in knot theory. This possibility arises from 
 the relation of the third Reidemeister move  with right distributivity (and eventually the Yang-Baxter 
operator), and the important 
work of Carter, Kamada, Saito, and other researchers on applications of quandle homology 
to knot theory (see \cite{CKS}).

\section{Acknowledgements}
I was partially supported by the GWU  Presidential Merit Fellowship.\\
I would like to thank Prof. Jozef Przytycki,  Carl Hammarsten, and Krzysztof Putyra for helpful discussions.

\ \\
\newpage
\ \\
Gregory T.~Mezera\\
Address:\\
Department of Mathematics,\\
George Washington University,\\
e-mail:   gtm@gwmail.gwu.edu
\end{document}